\mathchardef\mhyphen="2D
\newtheorem{lemma}{Lemma}[section]
\newtheorem{proposition}[lemma]{Proposition}
\newtheorem{theorem}[lemma]{Theorem}
\newtheorem{corollary}[lemma]{Corollary}
\theoremstyle{definition}
\newtheorem{definition}[lemma]{Definition}
\newtheorem{remark}[lemma]{Remark}
\let\phi\varphi
\let\varphi\phi
\let\subsetold\subset
\let\subset\subseteq
\renewcommand{\O}{\mathcal{O}}
\newcommand{\spec}{\operatorname{Spec}}
\newcommand{\tensor}{\otimes}
\newcommand{\CC}{\mathbb{C}}
\newcommand{\ZZ}{\mathbb{Z}}
\newcommand{\QQ}{\mathbb{Q}}
\newcommand{\PP}{\mathbb{P}}
\newcommand{\inv}{^{-1}}
\newcommand{\Hom}{\operatorname{Hom}}
\newcommand{\Gal}{\operatorname{Gal}}
\newcommand{\GL}{\operatorname{GL}}
\newcommand{\SL}{\operatorname{SL}}
\newcommand{\Res}{\operatorname{Res}}
\newcommand{\bmat}{\begin{pmatrix}}
\newcommand{\emat}{\end{pmatrix}}
\newcommand{\Frob}{\operatorname{Frob}}
\newcommand{\length}{\operatorname{length}}
\newcommand{\Spec}{\operatorname{Spec}}
\newcommand{\OO}{\O}
\newcommand{\OK}{\O_K}
\newcommand{\OKS}{\OK[S\inv]}
\newcommand{\Kbar}{\overline{K}}
\newcommand{\Hdr}{H_{\textrm{dR}}}
\title{Siegel's theorem via the Lawrence--Venkatesh method}
\author{Marc Paul Noordman}
\address{Bernoulli Institute, University of Groningen, The Netherlands}
\date{January 18, 2021}
\begin{document}
\maketitle

\begin{abstract}
    In the recent paper \cite{Lawrence--Venkatesh}, B. Lawrence and A. Venkatesh develop a method of proving finiteness theorems in arithmetic geometry by studying the geometry of families over a base variety. Their results include a new proof of both the $S$-unit theorem and Faltings' theorem, obtained by constructing and studying suitable abelian-by-finite families over $\PP^1\setminus\{0,1,\infty\}$ and over an arbitrary curve of genus $\geq 2$ respectively. In this paper, we apply this strategy to reprove Siegel's theorem: we construct an abelian-by-finite family on a punctured elliptic curve to prove finiteness of $S$-integral points on elliptic curves.  
\end{abstract}

\setlength{\parskip}{0ex plus 0.5ex minus 0.5ex}
\tableofcontents
\setlength{\parskip}{1ex plus 0.5ex minus 0.5ex}

\section{Introduction}
Let $K$ be a number field and $E$ an elliptic curve over $K$ given by a Weierstrass equation $y^2 = x^3 + ax + b$. Let $S$ be any finite set of places of $K$ including those where $a$ and $b$ have negative valuation and those dividing the discriminant of this Weierstrass model. An $S$-integral point of $E$ is then a solution $(x,y)$ of this Weierstrass equation with $x, y \in \OKS$. Siegel's theorem states that $E$ has only finitely many $S$-integral points (see \cite[Chapter 3]{Corvaja} for a modern treatment of this theorem, including several proofs).

In the recent paper \cite{Lawrence--Venkatesh}, B. Lawrence and A. Venkatesh develop a method of proving finiteness theorems in arithmetic geometry, by studying the geometry of families over a base and the associated complex-analytic and $p$-adic period mappings. They apply this method to prove or reprove several results in arithmetic geometry, including reproofs of the $S$-unit theorem (finiteness of the set of $S$-integral points on $\PP^1\setminus\{0,1,\infty\}$) and of Faltings' theorem (finiteness of the set of rational points on smooth projective curves of genus $\geq 2$). This new approach has generated much interest. In \cite{Lawrence-Sawin} this method is used to prove a Shafarevich theorem for hypersurfaces of abelian varieties. Uniformity aspects of the Lawrence--Venkatesh method are analyzed in \cite{Nasserden-Xiao}, and effectivity aspects in  \cite{ManyAuthors}. The latter work moreover compares the Lawrence--Venkatesh method to the Kim--Chabauty approach to (effective) finiteness of rational points on curves. Finally, we mention the work of \cite{Javanpeykar-Litt}, who show that in the context of the Lawrence--Venkatesh method, one can often extend results about finiteness or non-Zariski denseness of sets of points over number rings to the same results for points over more general finitely generated rings. 

The goal of the present paper is to show that Siegel's theorem admits a proof via the Lawrence--Venkatesh method as well. Note that since \cite{Lawrence--Venkatesh} already handles the case of $\PP^1\setminus\{0,1,\infty\}$ and of smooth curves of genus $\geq 2$, this is the only remaining case left in dimension 1.\footnote{After uploading this work to arXiv, we have been made aware of H. Liu's thesis \cite{LiuMasterThesis}, which also handles this case.} We construct an abelian-by-finite family over a punctured elliptic curve and we show that it has the correct properties for the Lawrence--Venkatesh method to succeed. 

The paper consists of two parts. In the first section, we revisit briefly the Lawrence--Venkatesh method. We formulate a theorem (\cref{main-theorem}) which summarizes the result of this approach, in the case of an abelian-by-finite family. This theorem may be considered a ``black box'', in that one can apply this theorem without knowing the intricate and sometimes technically demanding techniques underlying \cite{Lawrence--Venkatesh}; as such we hope that articulating this theorem explicitly may help increase the reach and usability of this new method. In the second section, we show how to apply the Lawrence--Venkatesh method to prove Siegel's theorem. We construct a suitable abelian-by-finite family of a punctured elliptic curve, and show that it satisfies the conditions of the black box theorem. As is often the case, the main difficulty is showing that the monodromy is large enough.

\subsection*{Notation}
In the remainder of this paper we will use the following notation throughout. 

Let $K$ be a number field with ring of integers $\OO_K$, and $S$ a finite set of finite places of $K$. We will assume that $S$ contains all places of $K$ that are ramified over the corresponding primes of $\QQ$. We denote by $\OKS$ the ring of $S$-integers of $K$, i.e. the ring of elements of $K$ that have non-negative valuation for all valuations $v$ on $K$ with $v \notin S$. For any finite place $v$, we choose some Frobenius $\Frob_v \in \Gal(\overline{K}/K)$. We denote by $K_v$ the $v$-adic completion of $K$. The unique continuous extension of $\Frob_v$ to $K_v$ will also be denoted $\Frob_v$. We will also fix an inclusion $K \subset \CC$. 

Let $X/K$ be a smooth, geometrically connected, but not necessarily proper, algebraic variety. We will assume that $X$ has a smooth model $\mathcal{X}$ over $\OKS$, and we will fix one. We are interested in studying the set $\mathcal{X}(\OKS)$ of $S$-integral points of this model, which by abuse of notation we will just denote by $X(\OKS)$. Note that in general this set depends on the choice of the model $\mathcal{X}$. 

\subsection*{Acknowledgments}
I am grateful to Jaap Top for interesting conversations and useful comments on this work.

\subsection*{Addendum}
Shortly after the first version of this work appeared on arXiv, A. Cadoret kindly alerted me to the master thesis of H. Liu \cite{LiuMasterThesis}, which contains, along with a very well-written and detailed explanation of the Lawrence--Venkatesh proof of Faltings' theorem, also a proof of Siegel's theorem via the Lawrence--Venkatesh method, although via a differently constructed abelian-by-finite family. 

\section{The Lawrence--Venkatesh framework}
In this section we explain the Lawrence--Venkatesh framework, in the case of an abelian-by-finite family. To simplify matters, we will use a theorem by Faltings (\cite{Faltings}) which says that geometric \'etale cohomology of abelian varieties is semisimple (the authors of \cite{Lawrence--Venkatesh} explicitly avoid using this theorem in order to increase independence of their proof of Faltings' theorem from Faltings' own proof). We start by recalling relevant definitions from \cite{Lawrence--Venkatesh}.

\section{The Lawrence--Venkatesh framework}
In this section we explain the Lawrence--Venkatesh framework, in the case of an abelian-by-finite family. To simplify matters, we will use a theorem by Faltings (\cite{Faltings}) which says that geometric \'etale cohomology of abelian varieties is semisimple (the authors of \cite{Lawrence--Venkatesh} explicitly avoid using this theorem in order to increase independence of their proof of Faltings' theorem from Faltings' own proof). We start by recalling relevant definitions from \cite{Lawrence--Venkatesh}.

The following is Definition 5.1 in \cite{Lawrence--Venkatesh}. 
\begin{definition}
An \emph{abelian-by-finite family over $X$} is a sequence of maps $A \overset{f}\to X' \overset{\pi}\to X$ where $A \to X'$ is a family of polarized abelian varieties and $X' \to X$ is finite \'etale. A \emph{good model} of such an abelian-by-finite family over $\OKS$ is a sequence of maps $\mathcal{A} \to \mathcal{X}' \to \mathcal{X}$ of schemes over $\OKS$, where again $\mathcal{A} \to \mathcal{X}'$ is a family of polarized abelian varieties and $\mathcal{X}' \to \mathcal{X}$ is finite \'etale, which give back the maps $A \to X' \to X$ after base change $- \otimes _{\OKS} K$, and where $\mathcal{X}$ is the model of $X$ that we fixed, and which moreover satisfies the following technical conditions: the cohomology sheaves $\mathbf{R}^q(\pi\!\circ\! f)_*\Omega^p_{\mathcal{A}/\mathcal{X}}$ and de Rham sheaves $\mathcal{H}^q = \mathbf{R}^q(\pi\!\circ\! f)_*\Omega^{\bullet}_{\mathcal{A}/\mathcal{X}}$ on $\mathcal{X}$ are locally free as $\OO_{\mathcal{X}}$-modules, and the Gauss-Manin connection extends to a morphism $\mathcal{H}^q \to \mathcal{H}^q \tensor \Omega^1_{\mathcal{X}/\OKS}$ on $\mathcal{X}$. 
\end{definition}

By the usual arguments, any abelian-by-finite family over $K$ has a good model over $\OK[(S')\inv]$ for some finite set $S' \supseteq S$ of places of $K$.  

The second important definition from \cite{Lawrence--Venkatesh} we need is the notion of full monodromy, which we now explain. Fix an abelian-by-finite family 
\[A \overset{f}\to X' \overset{\pi}\to X.\]
We denote by $g$ the relative dimension of $A$ over $X'$. With respect to the inclusion $K \subset \CC$ we fixed, we get continuous maps $A(\CC) \to X'(\CC) \to X(\CC)$. Fix a point $x_0 \in X(\CC)$. The fiber $\pi\inv(x_0) \subsetold X'(\CC)$ consists of $\deg \pi$ points, and we get a decomposition
\[ H^1_B(A_{x_0}(\CC), \QQ) = \bigoplus_{\pi(x') = x_0} H^1_B(A_{x'}(\CC), \QQ). \]
Here $H^1_B$ denotes the first Betti cohomology (also known as singular cohomology). The action of the fundamental group $\pi_1(X(\CC), x_0)$ by monodromy on these spaces preserves this decomposition (but not necessarily the individual factors). The symplectic form $\omega$ induced by the polarization on $A$ is also preserved by this action. 

\begin{definition}
Let $\rho\colon \pi_1(X(\CC), x_0) \to \GL(H^1_B(A_{x_0}(\CC), \QQ))$ be the monodromy representation. The abelian-by-finite family has \emph{full monodromy} if the Zariski closure of the image of $\rho$ contains $\prod_{\pi(x') = x_0} \operatorname{Sp}(H^1_B(A_{x'}(\CC), \QQ), \omega)$. 
\end{definition}

In order to apply the Lawrence--Venkatesh method, we need to make sure that the Frobenius action on the fibers of $\pi$ is large enough. In order to make this precise, we make the following definition, which is a variation of \cite[Definition 5.2]{Lawrence--Venkatesh} 
\begin{definition}
Let $E$ be a finite $\Gal(\overline{K}/K)$-set and $v$ a place of $K$ for which this action is unramified. Then the \emph{$v$-length of $E$}, denoted $\length_v(E)$, is the average size of the orbits in $E$ under the action of $\Frob_v$, i.e.
\[ \length_v(E) = \frac{\#E}{\textrm{number of $\Frob_v$-orbits}} \]
If $E$ is a finite $K$-scheme we write $\length_v(E)$ instead of $\length_v(E(\overline{K}))$. Note that the $v$-length of $E$ does not depend on the choice of the Frobenius element $\Frob_v$. 
\end{definition}

The following theorem is a compact restatement of the Lawrence--Venkatesh method. This theorem is not stated as such in \cite{Lawrence--Venkatesh}, but all the components of the proof are there. Therefore we will give a short sketch of the argument. We stress that our arguments uses the fact that the Galois representation on \'etale cohomology of an abelian variety over a number field is semi-simple, which is a deep theorem of Faltings \cite{Faltings}. The authors of \cite{Lawrence--Venkatesh} avoid using this theorem, both to keep their proof of Faltings' theorem independent of Faltings' own proof, and to be able to apply the same methods to more general families for which semisimplicity is not unconditionally known. We have also incorporated the results of Bakker and Tsimerman \cite{Bakker-Tsimerman}, as explained in \cite[Section 9]{Lawrence--Venkatesh}. 

\begin{theorem}\label{main-theorem}
Let $T \subset X(\OKS)$ be a subset of $S$-integral points of $X$. Assume that there is an abelian-by-finite family $A \overset{f}\to X' \overset{\pi}\to X$, with $\frac{1}{2}g(g+1)\deg(\pi) > \dim(X)$, with the following properties
\begin{enumerate}
    \item The family has full monodromy, and
    \item For every $x \in T$, there is a finite place $v$ of $K$ for which the Galois action on $\pi\inv(x)$ is unramified and such that 
    \begin{equation*} 
        \length_v(\pi\inv(x)) \geq \frac{ 4g^2 \deg(\pi)}{\frac{1}{2}g(g+1)\deg(\pi) - \dim(X)}.
    \end{equation*}
\end{enumerate}
Then $T$ is not Zariski dense in $X$. 
\end{theorem}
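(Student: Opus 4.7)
The plan is to implement the Lawrence--Venkatesh strategy for abelian-by-finite families: reduce to finitely many global Galois classes via Faltings' theorems, transport the problem to a $p$-adic period domain via crystalline comparison, and conclude by a dimension count combined with the Bakker--Tsimerman algebraicity of period maps.

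For each $x \in T$ I attach the $p$-adic Galois representation
\[ \rho_x := \Het^1\bigl(((\pi\circ f)\inv(x))_{\Kbar}, \QQ_p\bigr), \]
the \'etale cohomology of the (disjoint union of) polarized abelian variety fibers above $x$. The good model hypothesis ensures these are unramified outside a fixed finite set of places, and they all have the same Hodge numbers. Faltings' finiteness theorem for polarized abelian varieties of bounded discriminant, together with his semisimplicity theorem, then implies that only finitely many isomorphism classes of $\rho_x$ arise. Partitioning $T = \bigsqcup_i T_i$ by isomorphism class of $\rho_x$, it suffices to show each $T_i$ is not Zariski dense.

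Fix a stratum $T_i$ with a reference point $x_0 \in T_i$, and let $v$ be the place provided by hypothesis (2), with residue characteristic $p$; after enlarging $S$ if necessary, assume the family has good reduction at $v$. Fontaine's crystalline comparison identifies $\rho_x|_{G_{K_v}}$ with the filtered Frobenius module $\Hdr^1((\pi\circ f)\inv(x)/K_v)$. Over a residue disk $U \subset X(\OO_{K_v})$ around $x_0$, the de Rham bundle can be trivialized so that the crystalline Frobenius $\phi$ (which depends only on the reduction mod $v$) is constant, while the Hodge filtration $F^\bullet_x$ varies $p$-adically analytically in $x$. This yields the $p$-adic period map
\[ \Phi\colon U \longrightarrow \Per, \]
where $\Per$ is the flag variety of polarized Hodge filtrations --- a product of Lagrangian Grassmannians indexed by the points of $\pi\inv(x_0)$, of total dimension $\tfrac12 g(g+1)\deg(\pi)$. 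Since all $x \in T_i$ give rise to isomorphic filtered $\phi$-modules at $v$, the image $\Phi(T_i \cap U)$ lies in a single orbit $\mathcal{O} \subset \Per$ of the centralizer $Z(\phi)$ in $\prod \operatorname{Sp}$.

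The key estimate is a bound on $\dim \mathcal{O}$ in terms of $\length_v(\pi\inv(x_0))$: by compatibility of crystalline and \'etale Frobenii, long $\Frob_v$-orbits on $\pi\inv(x_0)$ force $\phi$ to have cyclic block structure, and using the Weil bounds on eigenvalues one obtains by a linear-algebra calculation (as in Lawrence--Venkatesh) that
\[ \dim \mathcal{O} \;\leq\; \frac{4g^2\deg(\pi)}{\length_v(\pi\inv(x_0))}, \]
which by hypothesis (2) is at most $\tfrac12 g(g+1)\deg(\pi) - \dim(X)$. Finally, full monodromy (hypothesis (1)), transferred from complex to $p$-adic monodromy by Katz's theorem, combined with the Bakker--Tsimerman algebraization, implies that $\Phi(U)$ is Zariski-dense in a subvariety of $\Per$ of dimension exactly $\dim(X)$, and in particular not contained in $\mathcal{O}$. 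Hence $\Phi\inv(\mathcal{O}) \cap U$ is a proper Zariski-closed subvariety of $U$, so $T_i \cap U$ is not Zariski dense in $U$. Covering the compact space $X(\OO_{K_v})$ by finitely many residue disks and taking the finite union over strata $T_i$ yields the theorem.

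The main technical obstacle is this last dimension-count-plus-monodromy step. Bounding $\dim \mathcal{O}$ via $\length_v$ is a linear-algebra calculation in symplectic groups, but transferring monodromy from complex to $p$-adic and the algebraization of period images are deep results which together form the technical core of the Lawrence--Venkatesh machinery; it is precisely the purpose of a "black box" formulation like \cref{main-theorem} to package them.
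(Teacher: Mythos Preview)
Your overall architecture matches the paper's: Faltings' semisimplicity and finiteness to cut down to finitely many classes, crystalline comparison to set up a $p$-adic period map, a centralizer bound governed by $\length_v$, and Bakker--Tsimerman to conclude. Two steps, however, are imprecise enough to count as gaps.

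First, the order in which you choose $v$. Hypothesis~(2) furnishes a place $v$ that depends on $x$, and you pick the $v$ attached to your reference point $x_0 \in T_i$. But you then need the length bound at \emph{this} $v$ for every $x \in T_i$, across all residue disks at $v$. Nothing in your stratification by the isomorphism class of the global representation $\rho_x$ guarantees that $\pi\inv(x)$ and $\pi\inv(x_0)$ are isomorphic as $\Gal(\Kbar/K)$-sets, so $\length_v(\pi\inv(x))$ at your fixed $v$ could be too small for other $x$. The paper handles this before any cohomological input: by Hermite--Minkowski there are only finitely many possibilities for the \'etale $K$-algebra $\pi\inv(x)$, and Chebotarev then lets one choose $v$ from a fixed finite set $S'$ disjoint from $S$; only after fixing $v$ does one invoke finiteness of filtered $\phi$-modules.

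Second, and more seriously, the final monodromy step is misstated. Full monodromy means the Zariski closure of $\Phi(U)$ is the \emph{entire} period domain $\Per$, of dimension $\tfrac12 g(g+1)\deg\pi$, not ``a subvariety of $\Per$ of dimension exactly $\dim(X)$''. The naive dimension comparison your sentence suggests (``in particular not contained in $\mathcal{O}$'') therefore does not work: $\dim\mathcal{O}$ may well exceed $\dim X$. What one actually uses (the paper cites \cite[Lemma~9.3]{Lawrence--Venkatesh}, i.e.\ the Bakker--Tsimerman input) is: if $\Phi$ has Zariski-dense image in $\Per$ and $\mathcal{Z}\subset\Per$ is algebraic with $\operatorname{codim}_{\Per}\mathcal{Z}\geq\dim X$, then $\Phi\inv(\mathcal{Z})$ is not Zariski dense in $X$. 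Your centralizer bound gives exactly $\operatorname{codim}\mathcal{O}\geq\dim X$, so the correct conclusion follows --- but via this codimension inequality and Ax--Schanuel, not via the argument you wrote. Relatedly, $\Phi\inv(\mathcal{O})$ is a priori only $p$-adic analytic, not ``a proper Zariski-closed subvariety of $U$''; Bakker--Tsimerman yields containment in such a subvariety, which suffices.
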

\begin{proof}[Proof sketch]
After possibly increasing $S$, we can and will assume that the abelian-by-finite family admits a good model over $\OKS$. Note also that in the second condition, we can restrict to $v \notin S$: for an $x \in T$ and $v$ as in the second condition, Chebotarev's density theorem implies that there are infinitely many places $v'$ of $K$ for which the Galois action on $\pi\inv(x)$ is also unramified, and for which the actions of $\Frob_v$ and $\Frob_{v'}$ are conjugate and hence give the same length. Moreover, the assumption that the abelian-by-finite family has a good model over $\OKS$ implies that the residue fields of the points in $\pi\inv(x)$ are field extensions of $K$ which are unramified away from $S$. It follows from the Hermite-Minkowski theorem that there are only finitely many possibilities for $\pi\inv(x)$ as a $\Gal(\overline{K}/K)$-set, where $x$ ranges over $T$. Therefore, we can choose the places $v$ that occur in condition (2) to lie in a fixed finite set of places $S'$ with $S \cap S' = \emptyset$. Therefore we may decompose $T = T_1 \cup \cdots \cup T_r$ as a finite union of subsets, where for each $T_i$ there is a single place $v \in S'$ that works for all $x \in T_i$. Since a finite union of non-Zariski dense sets is not Zariski dense, we may replace $T$ by one of the $T_i$, and assume that we have a fixed place $v \notin S$ that works for all points $x \in T$. Let us fix such a place $v$. 

Let $x \in T$. We have $\pi\inv(x) = \Spec E_x$ for a $K$-algebra $E_x$ of degree $\deg \pi$. As in \cite[Lemma 2.3]{Lawrence--Venkatesh}, there are only finitely many possibilities for the filtered $\phi$-module $V_x := \Hdr^1(A_x / K_v)$ (as mentioned above, the \'etale cohomologies are semisimple so we don't need to bother with semisimplification). Through the factorization \[A_{x,v} \to \Spec E_{x,v} \to \Spec K_v,\]
where $A_{x,v} := A_x \tensor_K K_v$ and $E_{x,v} = E_x \tensor_K K_v$, this filtered $\phi$-module moreover has the structure of a free $E_{x,v}$-module of rank $2g$. The Frobenius of $V_x$ is compatible with the Frobenius of $E_{x,v}$, and the filtration of $V_x$ is given by a free and saturated $E_{x,v}$-submodule, which is moreover Lagrangian with respect to the symplectic form $\omega$ on $V_x$. For any point $x' \in X(K_v)$ with $x \equiv x' \mod v$, the same statements are true for $E_{x'}$ (where $\pi\inv(x') = \spec E_{x'}$) and $V_{x'} := \Hdr^1(A_{x'} / K_v)$. The Gauss-Manin connection gives us canonical bijections $E_{x,v} \overset{\sim}\to E_{x'}$ and $V_{x} \overset{\sim}\to V_{x'}$. These bijections commute with all the mentioned structure except for the filtration. Variation of this filtration then gives the $p$-adic period map
\[ \Phi_v\colon U \to \mathcal{H}_v(K_v) \]
where $U = \{x' \in X(K_v) : x' \equiv x \pmod v\}$ and $\mathcal{H}_v = \Res_{K_v}^{E_{x,v}} \operatorname{LGr}(V_x, \omega)$. Here $\operatorname{LGr}(V_x, \omega)$ is the Lagrangian Grassmannian classifying free rank-$g$ $E_{x,v}$-submodules of $V_x$ on which $\omega$ is trivial, and $\Res_{K_v}^{E_{x,v}}$ denotes Weil-restriction from $E_{x,v}$ to $K_v$. So $\mathcal{H}_v$ is an algebraic variety over $K_v$ whose $K_v$-points classify free rank-$g$ $E_{x,v}$-submodules of $V_x$ on which $\omega$ is trivial. The Zariski dimension of $\mathcal{H}_v$ is $\frac{1}{2}g(g+1)\cdot \deg(\pi)$. From \cite[Lemma 3.3]{Lawrence--Venkatesh}, the full monodromy assumption implies that the period map has Zariski-dense image.

Let $\phi$ be the Frobenius on $V_x$. We may write $E_{x,v} = L_1 \times \cdots \times L_r$ for unramified field extensions $L_i / K_v$. Note that $r$ is the number of $\Frob_v$-orbits on $\pi\inv(x)$, so that $\length_v(\pi\inv(x)) = \deg \pi / r$. Then the assumption on $\length_v(E_{x})$ means that 
\[ r \leq \frac{\frac{1}{2}g(g+1)\deg(\pi) - \dim(X)}{4g^2}.\]

The $E_{v,x}$-module on $V_x$ induces a decomposition $V_x = \bigoplus_{i=1}^r V_i$, where $V_i = V_x \tensor_{E_{v,x}} L_i$. Each $V_i$ is a $L_i$-vector space of dimension $2g$.  We write 
\[Z(\phi) = \{f \colon V_x \to V_x \textrm{ $E$-linear, and } \phi \circ f = f\circ \phi\}.\]
Then $Z(\phi)$ is a $K_v$-vector space. By a variation of \cite[Lemma 2.1]{Lawrence--Venkatesh} we have 
\[ \dim_{K_v} ( Z(\phi) ) \leq \sum_{i = 1}^r (\dim_{L_i} V_i)^2 = 4g^2 \cdot r \leq \frac{1}{2}g(g+1)\deg(\pi) - \dim(X).\]
Set $T' = T \cap U$. As in \cite[Section 3]{Lawrence--Venkatesh} it follows that $\Phi_v(T')$ is contained in an algebraic subset of $\mathcal{H}_v$ of the form $\mathcal{Z} := \bigcup_{i} Z(\phi) \cdot h_i$, for finitely many $h_i \in \mathcal{H}_v(K_v)$. We have $\dim \mathcal{Z} \leq \frac{1}{2}g(g+1)\deg(\pi) - \dim(X)$ and $\dim \mathcal{H}_v = \frac{1}{2}g(g+1)\deg(\pi)$, so 
\[\operatorname{codim}_{\mathcal{H}_v} \mathcal{Z} \geq \dim(X).\]
Applying \cite[Lemma 9.3]{Lawrence--Venkatesh}, we find that $\Phi_v\inv(\mathcal{Z})$, and therefore $T'$, is not Zariski dense in $X$. We do this for all of the finitely many $v$-adic residue disks of $X(K_v)$, and since the finite union of non-Zariski dense sets is not Zariski dense, we conclude that $T$ is not Zariski dense in $X$.  
\end{proof}

\section{An abelian-by-finite cover of \texorpdfstring{$E\setminus\{0\}$}{E\{0\}}}

In what follows, we fix an elliptic curve $E / K$ which has good reduction away from $S$.
Our goal is to construct an abelian-by-finite cover on $E \setminus\{0\}$ that proves Siegel's theorem: the set of $S$-integral points of $E$ is finite.  

In order to prove Siegel's theorem we may increase $K$ and $S$ without loss of generality (the set of $S$-integral points of $E$ will only increase if we do so). Hence, we may and will assume in what follows that $S$ contains all primes over $2$, and that the 2-torsion of $E$ is $K$-rational, and that the model of $E$ that we fixed over $O_K[S\inv]$ is given by the Weierstrass equation
\[ y^2 = x(x-1)(x-\lambda) \]
for some $\lambda \in \O_K[S\inv]$. As in the introduction, we will also continue to assume that $S$ contains all places of $K$ that are ramified in the extension $K/\QQ$. 

To define the abelian-by-finite family over $E \setminus \{0\}$, we first consider the family $A'$ of elliptic curves over $E \setminus E[2]$ defined by the equation
\[ v^2 = u (u-1)(u - x). \]
Here $x$ is still the $x$-coordinate on $E$, viewed as a regular function on $E \setminus E[2]$. In other words $A'$ is the pull-back of the Legendre family over $\PP^1\setminus\{0,1,\infty\}$ along the $x$-coordinate map $E\setminus E[2] \to \PP^1\setminus\{0,1,\lambda, \infty\}$.
For every $m \geq 1$ we define the family $A_m$ over $E\setminus \{0\}$ as the composition of the map $A' \to E\setminus E[2]$, restricted to $E \setminus E[2^m]$, with the multiplication-by-$2^m$ map $[2^m]\colon E\setminus E[2^m] \to E\setminus\{0\}$. Geometrically, the fiber of $A_m$ over a geometric point $e \in E\setminus \{0\}$ is the disjoint union of the elliptic curves $v^2=u(u-1)(u-x')$ where $x'$ runs over the $x$-coordinates of the $2^{2m}$ geometric points of $E$ mapping to $e$ under the $[2^m]$ map. The situation is depicted in the following diagram:

\begin{center}
    \begin{tikzpicture}
      \matrix (m) [matrix of math nodes,row sep=3em,column sep=3em,minimum width=3em]
      {
        \mathrm{Legendre} & A_m &  \\
        \PP^1\setminus D & E \setminus E[2^m] & E \setminus \{0\}\\};
      \path[-stealth]
        (m-1-2) edge node [left]  {$\pi$} (m-2-2)
        (m-2-2) edge node [above] {$[2^m]$} (m-2-3)
        (m-1-2) edge node [above] {} (m-1-1)
        (m-2-2) edge node [above] {$x$} (m-2-1)
        (m-1-1) edge node [left] {} (m-2-1);
    \end{tikzpicture} 
\end{center}
Here $D$ is the divisor of $x$-coordinates of points in $E[2^m]$ (including $\infty$) and $\mathrm{Legendre}$ denotes the Legendre family $v^2 = u(u-1)(u-t)$. To prove Siegel's theorem we will apply \cref{main-theorem} to the abelian-by-finite family $A_m \to E \setminus E[2^m] \to E \setminus \{0\}$ for $m = 3$. 

\subsection{The monodromy}

Let $e_0 \in E(K) \setminus\{0\}$ be some arbitrary base point. Let $W_m = H^1_{B}((A_m)_{e_0}(\CC), \QQ)$ be the first Betti cohomology of the fiber of $A_m \to E \setminus \{0\}$ over $e_0$. Monodromy gives a representation 
\[ \rho_m \colon \pi_1(E(\mathbb{C}) \setminus \{0\}, e_0) \longrightarrow \operatorname{GL}(W_m). \]
Our goal in this subsection is to study this representation. To ease the notation, we will abuse notation in this subsection and identify varieties with their $\CC$-points, i.e. we will write $E$ instead of $E(\CC)$. 

Note that we have
\[ W_m  = \bigoplus_{[2^m]e' = e_0} H^1_{B}((A_m)_{e'}, \mathbb{Q}), \] 
where the direct sum runs over all points $e' \in E\setminus E[2^m]$ mapping to $e_0$ under $[2^m]$. Note that the fiber $(A_m)_{e'}$ is just the elliptic curve given by $v^2 = u(u-1)(u-x')$, where $e' = (x',y')$. 

The following proposition says that the family $A_m \to E\setminus\{0\}$ has full monodromy. 
\begin{proposition}\label{big-monodromy}
The Zariski closure of the image of $\rho_m$ in $\GL(W_m)$ contains 
\[\prod_{[2^m]e' = e_0} \SL(H^1_{B}((A_m)_{e'}, \QQ)).\]
\end{proposition}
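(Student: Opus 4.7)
The map $[2^m]\colon E\setminus E[2^m]\to E\setminus\{0\}$ is \'etale Galois with deck group $E[2^m]$. Fix a lift $e_1\in E\setminus E[2^m]$ of $e_0$; then $\pi_1(E\setminus E[2^m], e_1)$ is normal in $\pi_1(E\setminus\{0\}, e_0)$ with quotient $E[2^m]$, and its image $N\subset\GL(W_m)$ under $\rho_m$ preserves the direct-sum decomposition of $W_m$. Let $G$ and $M$ denote the Zariski closures of $\rho_m(\pi_1(E\setminus\{0\}, e_0))$ and of $N$, respectively. Then $M\triangleleft G$, and $G/M\cong E[2^m]$ acts on the set of summands $\{V_{e'}\}$ transitively and freely. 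The goal reduces to showing $M\supseteq\prod_{e'}\SL(V_{e'})$, which I would do in two stages: density on each factor, then combining factors.

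\textbf{Density on each factor.} The family $A_m|_{E\setminus E[2^m]}$ is the pullback of the Legendre family via the composition $E\setminus E[2^m]\xrightarrow{x}\PP^1\setminus D\hookrightarrow\PP^1\setminus\{0,1,\infty\}$. The first arrow is \'etale of degree $2$ (the ramification of $x$ is exactly at $E[2]\subset E[2^m]$), and the second is an open immersion, hence surjective on $\pi_1$; so the image of $\pi_1(E\setminus E[2^m])$ in $\pi_1(\PP^1\setminus\{0,1,\infty\})$ has index at most $2$. The monodromy of the Legendre family on $H^1$ is the principal congruence subgroup $\Gamma(2)\subset\SL_2(\ZZ)$, which is Zariski dense in $\SL_2$, and any finite-index subgroup of a Zariski-dense subgroup of a connected algebraic group remains Zariski dense. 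Hence the projection of $M$ onto each $\SL(V_{e'})$ is surjective.

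\textbf{Combining factors.} Applying Goursat's lemma to $M^0\subset\prod_{e'}\SL(V_{e'})$, with each factor almost-simple, one gets that $M^0$ corresponds to a partition of the index set $\{e'\}$ into blocks with a diagonal system of isomorphisms within each block. Since $G$ normalizes $M^0$, this partition is $G$-invariant, hence invariant under the induced action of $G/M\cong E[2^m]$ on the index set. Freeness and transitivity of the $E[2^m]$-action then force the blocks to be cosets of some subgroup $H\subset E[2^m]$, each of size $|H|$, a power of $2$.

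\textbf{The main obstacle: ruling out $|H|>1$.} I would analyze the monodromy of the small loop $\gamma$ around the puncture $0\in E$. Since $\gamma$ bounds a disk in $E$, it lies in $\pi_1(E\setminus E[2^m], e_1)$; by the translation-invariance of $[2^m]$, the lift of $\gamma$ to a loop in $E\setminus E[2^m]$ based at $e_1+t$ is a small loop around $t_*+t\in E[2^m]$ for some fixed $t_*$. Composing with $x$ (ramified of degree $2$ at $E[2]$ and \'etale elsewhere) and using that the Legendre family degenerates precisely at $\{0,1,\infty\}\subset\PP^1$, one finds that $\gamma$ acts nontrivially on $V_{e_1+t}$ if and only if $t_*+t\in\{0_E,P_1,P_2\}$, where $0_E$ is the identity of $E$ and $P_1,P_2$ are the $2$-torsion points with $x$-coordinates $0$ and $1$; crucially, the third $2$-torsion point $P_3$ (with $x(P_3)=\lambda$) is excluded, because the Legendre family is smooth at $t=\lambda$. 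So $\gamma$ acts nontrivially on exactly three of the $2^{2m}$ factors. Passing to a power $\gamma^n\in M^0$ (which preserves this pattern, the nontrivial Legendre monodromies being unipotent of infinite order), the diagonal identifications within each block force the $\gamma^n$-action to be uniform on blocks. Hence the set of nontrivially-acting indices is a union of $H$-cosets, of cardinality divisible by $|H|$. Since $3$ is odd and $|H|$ is a power of $2$, we must have $|H|=1$, giving $M^0=\prod_{e'}\SL(V_{e'})$ as desired. The hard part is exactly this last combinatorial identification of the three nontrivially-acting factors and the verification that no diagonal identifications can survive.
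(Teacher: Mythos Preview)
Your argument is correct and reaches the same conclusion as the paper, but the ``combining factors'' step is handled by a genuinely different mechanism. Both proofs establish surjectivity onto each $\SL_2$-factor via the Legendre monodromy, and both exploit the local monodromy of a loop around the puncture $0\in E$, showing it acts nontrivially on exactly three of the $2^{2m}$ summands (those whose lifts encircle $0_E$, $(0,0)$, or $(1,0)$; the fourth $2$-torsion point $(\lambda,0)$ lies over a smooth fibre of the Legendre family). From there the paper invokes \cite[Lemma~2.12]{Lawrence--Venkatesh} directly: for each pair $e'_1\neq e'_2$ it \emph{chooses} the path from $e_0$ to near $0$ so that the resulting loop acts nontrivially unipotently on the $e'_1$-summand and trivially on the $e'_2$-summand, using that $\{0_E,(0,0),(1,0)\}$ is not closed under translation by any nonzero $d\in E[2^m]$. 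Your route instead applies Goursat's lemma plus the $E[2^m]$-symmetry: the block partition for $M^0$ is forced to consist of cosets of a subgroup $H\subset E[2^m]$, and since a suitable power of the single loop $\gamma$ lies in $M^0$ and acts nontrivially on an odd number (three) of summands, $|H|=1$. Your approach is slightly slicker in that one loop and a parity observation suffice; the paper's approach trades the Goursat and coset bookkeeping for a ready-made lemma, at the cost of varying the path for every pair of indices.

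One small imprecision worth flagging: you assert $G/M\cong E[2^m]$, but what your argument actually uses---and what is immediate---is only that the permutation action of $G$ on the set of summands is the free transitive $E[2^m]$-action; the abstract quotient $G/M$ need not be identified for the coset conclusion to go through.
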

\begin{proof}
Let $\Gamma$ be the Zariski closure of the image of $\rho$ in $\GL(W_m)$, and set
\[\Delta = \Gamma \cap \prod_{[2^m]e' = e_0} \SL(H^1_{B}((A_m)_{e'}, \mathbb{Q})).\]
We will use \cite[Lemma 2.12]{Lawrence--Venkatesh}, which says the following: if $\Delta$ projects surjectively to each factor $\SL(H^1_{B}((A_m)_{e'}, \mathbb{Q}))$, and if for each pair $e'_1 \neq e'_2$ there is a $g \in \Delta$ which projects to unipotent elements of $\SL(H^1_{B}((A_m)_{e'_1}, \mathbb{Q}))$ and $\SL(H^1_{B}((A_m)_{e'_2}, \mathbb{Q}))$ with fixed spaces of different dimensions, then in fact $\Delta = \prod_{[2^m]e' = e_0} \SL(H^1_{B}((A_m)_{e'}, \mathbb{Q}))$.  

First we consider surjectivity of the projections. Let $e'_0 \in E \setminus E[2^m]$ be a point with $[2^m]e'_0 = e_0$. Then the \'etale map $[2^m]\colon E \setminus E[2^m] \to E\setminus \{0\}$ induces on fundamental groups an injective map $\pi_1(E \setminus E[2^m], e_0') \to \pi_1(E \setminus \{0\}, e_0)$, and the restriction of $\rho$ to $\pi_1(E \setminus E[2^m], e_0')$ stabilizes the direct summand $H^1_{B}((A_m)_{e'_0}, \mathbb{Q})$ of $W_m$. 
But we also can consider the $x$-coordinate map $x\colon E\setminus E[2^m] \to \PP^1 \setminus D$, where $D$ is the set of $x$-coordinates of the $2^m$-torsion points of $E$. This map is also \'etale, and it induces an injective homomorphism $\pi_1(E \setminus E[2^m], e_0') \to \pi_1(\PP^1 \setminus D, x')$. The cover $A_m \to E \setminus E[2^m]$ is just the pull-back of the Legendre family over $\PP^1\setminus D$, and the Zariski closure of the image of monodromy for the Legendre family is well-known to be  $\SL_2$ (see for example \cite[Theorem 1.1.7]{PeriodBook} for an explicit description of this monodromy). So the restriction of $\rho$ to $\pi_1(E \setminus E[2^m], e_0')$ has Zariski-closure a finite index subgroup of $SL_2$, which is therefore equal to $SL_2$ since $SL_2$ is connected.    

Now we show that there are enough elements acting unipotently on the various summands with fixed spaces of different dimensions. Let $\gamma$ be a path in $E \setminus \{0\}$ that goes from $e_0$ to a point close to the identity $0 \in E$, then circles around this point once, and goes back again to $e_0$ in the same way. A lift of this path to $E \setminus E[2^m]$ is a loop from a point $e'_0$ over $e_0$ going around a $2^m$-torsion point and back to $e'_0$. The family $A_m$ extends in a smooth way over all $2^m$-torsion points except the points $(0,0)$, $(1,0)$ and $0$. Over these points, the monodromy is unipotent: indeed, the local monodromy of the Legendre family over $\PP^1\setminus\{0,1,\infty\}$ is unipotent around $0$ and $1$ and conjugate to \scalebox{0.7}{$\begin{pmatrix} -1 & 1 \\ 0 & -1 \end{pmatrix}$} around $\infty$, but the $x$-coordinate is locally 2-to-1 around these points. So we see that $\rho(\gamma)$ fixes each summand of the decomposition
$W_m = \bigoplus_{[2^m]e' = e_0} H^1_{B}((A_m)_{e'}, \mathbb{Q})$, and  
it acts in a non-trivial unipotent way on exactly three of the summands and trivially on the other summands. The three summands on which $\rho(\gamma)$ acts non-trivially depend on the choice of $\gamma$; more specifically on the choice of the path from $e_0$ to the point close to $0$. Now let $e_1'$ and $e_2'$ be two distinct points in the fiber $[2^m]\inv(e_0)$. We claim that we can choose the path $\gamma$ in such a way that $\rho(\gamma)$ acts non-trivially on the summand corresponding to the fiber over $e_1'$ and trivially on the summand corresponding to the fiber over $e_2'$. For this, let $d = e_2' - e_1' \in E$ be the difference. Then $d$ is $2^m$-torsion and non-trivial. The set $\{(0,0), (1,0), 0\}$ is not closed under shifting by $d$. Pick a point $P \in \{(0,0), (1,0), 0\}$ such that $P + d \notin \{(0,0), (1,0), 0\}$. Then choose a path $\gamma'$ in $E \setminus E[2^m]$ that starts at $e_1'$, goes to a point close to $P$, circles around $P$, and then goes back to $e_1'$ the same way. Set $\gamma = [2^m]\circ \gamma'$. This is a loop in $E \setminus \{0\}$ based at $e_0$. By construction, $\gamma'$ is the lift of $\gamma$ starting at $e_1'$, and it loops around the bad fiber over $P$, while $\gamma' + d$ is the lift of $\gamma$ starting at $e_2'$. The latter loops around $P + d$, which is not a bad fiber. Thus $\rho(\gamma)$ acts non-trivially unipotently on the summand corresponding to $e_1'$ and trivially on the summand corresponding to $e_2'$. 
\end{proof}

\subsection{Siegel's theorem}
We can now prove Siegel's theorem. We follow the argument in \cite[Section 4]{Lawrence--Venkatesh}. 

First we note that we may assume, after enlarging $K$ if necessary, that the $8$-torsion of $E$ is $K$-rational. Now let
\[ T = \{ e \in (E\setminus\{0\})(\OO_K[S\inv]) : e \notin 2E(K)\} \]
be the set of $S$-integral points of $E$ that are not divisible by $2$. We claim that it is enough to prove that $T$ is finite. Indeed, let $k$ be the largest integer such that $E(K)$ contains a point of order $2^k$. Then we have 
\[ (E\setminus\{0\})(\OO_K[S\inv]) \,\subseteq\, \bigcup_{j = 0}^k [2^j](T). \]
To see this, note that if $e$ is an $S$-integral point of $E$ and $e'$ a rational point such that $e = 2e'$, then also $e'$ is $S$-integral (because $e'$ does not reduce to the identity modulo any place $v \notin S$). If an $S$-integral point $e$ of $E$ is not divisible by $2^k$, then $e$ is in the right hand side of the claimed inclusion. Otherwise, write $e = 2^k \cdot e'$. Then by adjusting $e'$ by a $2^k$-torsion point if needed, we can ensure that $e' \in T$, so that $e \in [2^k](T)$. 

In order to prove finiteness of $T$, we want to apply \cref{main-theorem} to $T$ and the abelian-by-finite family $A_3 \to E \setminus E[8] \to E\setminus\{0\}$. This will imply that $T$ is not Zariski-dense in $E$ and therefore finite. We have already established that this family has full monodromy, so it remains to verify that condition 2 of the theorem is fulfilled. 

\begin{lemma}
Let $t \in T$. There is a place $v \notin S$ of $K$ such that all $\Frob_v$-orbits in $[8]\inv(t)(\overline{K})$ have length $8$. In particular $\length_v([8]\inv(t)) = 8$. 
\end{lemma}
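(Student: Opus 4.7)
The plan is to translate the problem into Kummer theory for $E$ and then invoke Chebotarev's density theorem. Fix any $P_0 \in E(\overline{K})$ with $8P_0 = t$; then $[8]\inv(t)(\overline{K}) = P_0 + E[8]$. Because $E[8] \subset E(K)$, the map $c\colon \Gal(\overline{K}/K) \to E[8]$ defined by $c(\sigma) = \sigma(P_0) - P_0$ is a \emph{group homomorphism}, and a direct calculation shows that $\sigma$ acts on $P_0 + E[8]$ simply by translation by $c(\sigma)$. Consequently, whenever $\Frob_v$ acts, all of its orbits on $[8]\inv(t)$ have the same length, namely the order of $c(\Frob_v)$ in $E[8] \cong (\ZZ/8\ZZ)^2$. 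So the task reduces to finding $v \notin S$ (at which $c$ is defined and unramified) with $c(\Frob_v)$ of order $8$.

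The key observation is that the image of $c$ must contain an element of order $8$. Indeed, if every $c(\sigma)$ had order dividing $4$, then $\sigma(4P_0) = 4P_0$ for all $\sigma$, hence $4P_0 \in E(K)$, and therefore $t = 8P_0 = 2(4P_0) \in 2E(K)$, contradicting $t \in T$. This is really the only place where the hypothesis $t \notin 2E(K)$ is used.

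Next I would verify that the finite Galois extension $L := K(P_0) / K$, whose Galois group is the image of $c$ inside $E[8]$, is unramified at every $v \notin S$. For such $v$ the curve $E$ has good reduction and $v \nmid 2$ (since $S$ contains the primes over $2$), so $[8]\colon \mathcal{E} \to \mathcal{E}$ is finite étale on the Néron model over $\OO_{K,v}$. Its fibre above the section $t \in \mathcal{E}(\OO_{K,v})$ is therefore a finite étale $\OO_{K,v}$-scheme, which forces the Galois action on $[8]\inv(t)(\overline{K})$ to be unramified at $v$.

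Finally, Chebotarev's density theorem applied to $L/K$ produces infinitely many places $v$, which we may further require to lie outside $S$, such that $\Frob_v$ maps to any prescribed element of $\Gal(L/K) \cong c(\Gal(\overline{K}/K))$. Choosing $\Frob_v$ to correspond to an element of order $8$, as granted by the previous step, yields the desired place. I do not anticipate any real difficulty beyond this; the whole content is the elementary descent argument from $t \notin 2E(K)$ to the existence of an order-$8$ element in the image of the Kummer homomorphism $c$.
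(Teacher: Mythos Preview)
Your argument is correct and follows essentially the same route as the paper: define the Kummer homomorphism $c(\sigma)=\sigma(P_0)-P_0$ using that $E[8]\subset E(K)$, observe that $t\notin 2E(K)$ forces an order-$8$ element in the image (else $4P_0\in E(K)$), and conclude via Chebotarev. Your write-up is in fact slightly more explicit than the paper's, spelling out why all $\Frob_v$-orbits have the same length and why the extension is unramified outside $S$.
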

\begin{proof}
Write $[8]\inv(t) = \Spec E_t$. Then we have $E_t = \prod_{i} L_i$, where the $L_i$ are finite field extensions of $K$ that are unramified away from $S$. In fact, the $L_i$ are the fields obtained by adjoining the coordinates of points in $[8]\inv(t)$ to $K$. Since the $8$-torsion of $E$ is $K$-rational, these fields are in fact the same, so we have $E_t = L^n$ for some $n$. 

To study the Galois action on the factors $L$ in this decomposition, let $\sigma \in \Gal(\Kbar/K)$. Then for each $t' \in E(\Kbar)$ with $8t' = t$, we also have $8\sigma(t') = t$. Therefore we have $\sigma(t') - t' \in E[8](\overline{K})$. Using that the $8$-torsion is $K$-rational, it is not hard to see that this element $\sigma(t') - t'$ does not depend on the choice of $t'$. Thus we get a map $\Gal(\Kbar/K) \to E[8](K) \cong (\ZZ/8\ZZ)^2$ that describes the Galois action on the fiber over $t$. (This is of course just a well-known explicit description of the Kummer map $E(K) \to H^1(\Gal(\Kbar/K), E[8]) = \Hom(\Gal(\Kbar/K), E[8](K))$, where the last equality follows from the 8-torsion being $K$-rational.) The image of $\Gal(\Kbar/K) \to E[8](K)$ contains an element of order 8: if it didn't, then $\Gal(\Kbar/K)$ would stabilize the point $4t'$, contradicting the assumption that $t$ is not divisible by $2$ in $E(K)$. Thus, there is some $\sigma \in \Gal(\Kbar/K)$ which acts with order 8 on some (hence each) factor $L$. By Chebotarev's density theorem, there is a positive density of places $v \notin S$ such that $\Frob_v$ acts in the same way as $\sigma$ on $L$, and then $\Frob_v$ has orbits of length $8$ on $[8]\inv(t)(\overline{K})$. 
\end{proof}

\begin{corollary}[Siegel's theorem]
The set $(E\setminus \{0\})(\OKS)$ of $S$-integral points of $E$ is finite. 
\end{corollary}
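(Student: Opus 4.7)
The plan is to apply \cref{main-theorem} to the set $T$ together with the abelian-by-finite family $A_3 \to E\setminus E[8] \to E\setminus\{0\}$, and then to use the reduction established in the paragraph introducing $T$ to transfer finiteness from $T$ to the full set $(E\setminus\{0\})(\OKS)$. Concretely, since we showed $(E\setminus\{0\})(\OKS) \subseteq \bigcup_{j=0}^{k}[2^j](T)$ for an appropriate $k$ and each $[2^j]$ has finite fibres, it suffices to prove that $T$ is finite.

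To apply \cref{main-theorem} I would verify its two hypotheses for the family $A_3 \to E\setminus E[8] \to E\setminus\{0\}$. Here the relative dimension is $g = 1$, the covering $\pi = [8]$ has degree $64$, and $\dim X = \dim(E\setminus\{0\}) = 1$, so the numerical condition $\tfrac{1}{2}g(g+1)\deg(\pi) > \dim(X)$ becomes $64 > 1$. Full monodromy is precisely \cref{big-monodromy}. The length bound demanded by the theorem is
\[ \frac{4g^2\deg(\pi)}{\tfrac{1}{2}g(g+1)\deg(\pi) - \dim(X)} = \frac{256}{63} < 5, \]
and the preceding lemma supplies, for every $t \in T$, a place $v \notin S$ with $\length_v([8]^{-1}(t)) = 8 > 256/63$. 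So both hypotheses hold.

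Consequently \cref{main-theorem} tells us that $T$ is not Zariski dense in $E\setminus\{0\}$. Because $E\setminus\{0\}$ is an irreducible one-dimensional variety, any proper Zariski-closed subset is a finite set of points, so $T$ is finite; the reduction from the first paragraph then yields the corollary. There is no genuine obstacle at this stage: the two substantial inputs (full monodromy and large Frobenius orbits) were already dispatched in \cref{big-monodromy} and the preceding lemma, and only a numerical verification remains. The one subtlety worth flagging is that the choice $m = 3$ is essentially forced by the numerics — for $m \in \{1,2\}$ the Kummer-theoretic length $2^m$ does not exceed the threshold $\tfrac{4 \cdot 4^m}{2 \cdot 4^m - 1}$ — which is why the previous lemma was tailored to the $8$-torsion.
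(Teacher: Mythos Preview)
Your proof is correct and follows exactly the paper's argument: reduce to the set $T$, apply \cref{main-theorem} to the family $A_3 \to E\setminus E[8] \to E\setminus\{0\}$ using \cref{big-monodromy} for full monodromy and the preceding lemma for the length bound, and then use that a non-Zariski-dense subset of a curve is finite. One small slip in your closing parenthetical: the threshold is $\tfrac{4\cdot 4^m}{4^m-1}$ (the denominator is $\tfrac12 g(g+1)\cdot 4^m-1 = 4^m-1$, not $2\cdot 4^m-1$), which is what makes $m=2$ genuinely fail ($4 < 64/15$) and $m=3$ the first value that works --- your conclusion there is right even though the displayed formula is off.
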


\begin{remark}
The proof of Siegel's theorem presented in this section relies on \cref{main-theorem}, which in turn relies on Faltings' deep result that the the Galois representations attached to abelian varieties over number fields are semisimple. However, for the purpose of proving Siegel's theorem the dependence on Faltings' result can be removed, in the same way as for the Lawrence--Venkatesh proof of the $S$-unit theorem, by applying \cite[Lemma 4.4]{Lawrence--Venkatesh}. 
\end{remark}

\bibliographystyle{alphaurl}
\bibliography{biblio.bib}

\newcommand{\etalchar}[1]{$^{#1}$}
\begin{thebibliography}{BBB{\etalchar{+}}19}

\bibitem[BBB{\etalchar{+}}19]{ManyAuthors}
Jennifer~S. {Balakrishnan}, Alex~J. {Best}, Francesca {Bianchi}, Brian
  {Lawrence}, J.~{Steffen M{\"u}ller}, Nicholas {Triantafillou}, and Jan
  {Vonk}.
\newblock {Two recent p-adic approaches towards the (effective) Mordell
  conjecture}.
\newblock {\em arXiv e-prints}, page arXiv:1910.12755, October 2019.
\newblock \href {http://arxiv.org/abs/1910.12755} {\path{arXiv:1910.12755}}.

\bibitem[BT19]{Bakker-Tsimerman}
Benjamin Bakker and Jacob Tsimerman.
\newblock The {A}x-{S}chanuel conjecture for variations of {H}odge structures.
\newblock {\em Invent. Math.}, 217(1):77--94, 2019.
\newblock \href {https://doi.org/10.1007/s00222-019-00863-8}
  {\path{doi:10.1007/s00222-019-00863-8}}.

\bibitem[CMSP17]{PeriodBook}
James Carlson, Stefan Müller-Stach, and Chris Peters.
\newblock {\em Period Mappings and Period Domains}.
\newblock Cambridge University Press, {S}econd edition, 2017.

\bibitem[Cor16]{Corvaja}
Pietro Corvaja.
\newblock {\em Integral points on algebraic varieties}, volume~3 of {\em
  Institute of Mathematical Sciences Lecture Notes}.
\newblock Hindustan Book Agency, New Delhi, 2016.
\newblock An introduction to Diophantine geometry.
\newblock \href {https://doi.org/10.1007/978-981-10-2648-5}
  {\path{doi:10.1007/978-981-10-2648-5}}.

\bibitem[Fal83]{Faltings}
G.~Faltings.
\newblock Endlichkeitss\"{a}tze f\"{u}r abelsche {V}ariet\"{a}ten \"{u}ber
  {Z}ahlk\"{o}rpern.
\newblock {\em Invent. Math.}, 73(3):349--366, 1983.
\newblock \href {https://doi.org/10.1007/BF01388432}
  {\path{doi:10.1007/BF01388432}}.

\bibitem[JL19]{Javanpeykar-Litt}
Ariyan {Javanpeykar} and Daniel {Litt}.
\newblock {Integral points on algebraic subvarieties of period domains: from
  number fields to finitely generated fields}.
\newblock {\em arXiv e-prints}, page arXiv:1907.13536, July 2019.
\newblock \href {http://arxiv.org/abs/1907.13536} {\path{arXiv:1907.13536}}.

\bibitem[Liu20]{LiuMasterThesis}
Haohao Liu.
\newblock Lawrence--{V}enkatesh's $p$-adic approach to {M}ordell's conjecture.
\newblock Master thesis, 2020.
\newblock URL: \url{http://home.ustc.edu.cn/~kyung/Lawrence-Venkatesh.pdf}.

\bibitem[LS20]{Lawrence-Sawin}
Brian {Lawrence} and Will {Sawin}.
\newblock {The Shafarevich conjecture for hypersurfaces in abelian varieties}.
\newblock {\em arXiv e-prints}, page arXiv:2004.09046, April 2020.
\newblock \href {http://arxiv.org/abs/2004.09046} {\path{arXiv:2004.09046}}.

\bibitem[LV20]{Lawrence--Venkatesh}
Brian Lawrence and Akshay Venkatesh.
\newblock Diophantine problems and {$p$}-adic period mappings.
\newblock {\em Invent. Math.}, 221(3):893--999, 2020.
\newblock \href {https://doi.org/10.1007/s00222-020-00966-7}
  {\path{doi:10.1007/s00222-020-00966-7}}.

\bibitem[NX19]{Nasserden-Xiao}
Brett {Nasserden} and Stanley~Yao {Xiao}.
\newblock {Uniformity of fibres of period mappings and the $S$-unit equation}.
\newblock {\em arXiv e-prints}, page arXiv:1910.14122, October 2019.
\newblock \href {http://arxiv.org/abs/1910.14122} {\path{arXiv:1910.14122}}.

\end{thebibliography}

\end{document}